\theoremstyle{plain}
\newtheorem{teo}{Theorem}[section]
\newtheorem{cor}[teo]{Corollary}
\newtheorem{lem}[teo]{Lemma}
\newtheorem{prop}[teo]{Proposition}
\newtheorem*{raabe}{Raabe�s Test}
\theoremstyle{definition}
\newtheorem{exa}[teo]{Example}
\newtheorem{rem}[teo]{Remark}
\numberwithin{equation}{section}
\def\bbR{{\mathbb R}}
\def\bbP{{\mathbb P}}
\def\bbZ{{\mathbb Z}}
\def\bbN{{\mathbb N}}
\def\bbE{{\mathbb E}}
\def\qed{\hfill $\square$}
\def\hofp{\textit{Homogeneous Firework Process}}
\def\horfp{\textit{Reverse Homogeneous Firework Process}}
\def\hefp{\textit{Heterogeneous Firework Process}}
\def\herfp{\textit{Reverse Heterogeneous Firework Process}}
\begin{document}

\baselineskip=26pt

\title{Rumour Processes on $\bbN$}

\author{Valdivino~V.~Junior}
\author{F\'abio~P.~Machado}
\author{Mauricio~Zuluaga}

\address[F\'abio~P.~Machado]
{Institute of Mathematics and Statistics
\\ University of S\~ao Paulo \\ Rua do Mat\~ao 1010, CEP
05508-090, S\~ao Paulo, SP, Brazil.}

\address[Valdivino~V.~Junior]
{Federal University of Goias
\\ Campus Samambaia, CEP 74001-970, Goi\^ania, GO, Brazil.}

\address[Mauricio~Zuluaga]
{Department of Statistics, Federal University of Pernambuco
\\ Cidade Universit\'aria, CEP 50740-540, Recife, PE, Brazil.}

\email{vvjunior@mat.ufg.br, (fmachado@ zuluaga@)ime.usp.br}

\thanks{Research supported by CNPq (306927/2007-1) and FACEPE (0126-1-02/06).}

\keywords{coverage of space, epidemic model, disk-percolation, rumour model.}

\subjclass[2000]{60K35, 60G50}

\date{\today}

\begin{abstract}
We study four discrete time stochastic systems on $\bbN$ modeling
processes of rumour spreading. The involved individuals can either have an active or
a passive role, speaking up or asking for the rumour. The appetite in
spreading or hearing the rumour is represented by a set of random variables whose distributions
may depend on the individuals. Our goal is to understand - based on those random
variables distribution - whether the probability of having an infinite
set of individuals knowing the rumour is positive or not.
\end{abstract}

\maketitle

\section{Introduction}
\label{S: Introduction}

Until a few decades ago, epidemic and rumour models where treated under the
same class of models. While there is a clear similitude
among the status of the individuals in the models (susceptible are ignorants,
immunes are stiflers and infected are spreaders) the rates at which individuals
change their status might be qualitatively different (Pearce~\cite{Pearce}).
Generally speaking, the production of stiflers is definitely more complex than the production of
immune individuals.

Lately the mathematics of rumors has observed a good deal of interest. The
focus used to be at deterministic or stochastic models, modeling
homogeneously mixed populations living on spaces with no structure
as the Maki-Thompson (Maki and Thompson~\cite{MT} and
Sudbury~\cite{Sudbury}) and Daley-Kendall (Daley and Kendal~\cite{DK} and
Pittel~\cite{Pittel}) models.
Among the possible variations one can find in recent literature are competing
rumours (Kostka \textit{et al}~\cite{KOW}), more than two
people meeting at a time (Kesten and Sidoravicius~\cite{Kesten}), moving
agents (Kurtz \textit{et al}~\cite{Kurtz})
and rumours through tree-like graphs (Lebensztayn and Rodriguez~\cite{LR}
and Lebensztayn \textit{et al}~\cite{LMM}),
complex networks (Isham \textit{et al}~\cite{Isham}),
grids (Roy \textit{et al}~\cite{Roy})
and multigraphs (Bertachi and Zucca~\cite{BZ}).

Still, the most important question for both models, epidemic and rumour,
is in terms of a rumour model, if a spreader (an individual
who wants to see the rumour spread) is introduced into a reservoir of ignorants under
what conditions the rumour will spread to a large proportion of the population, instead
of dying out quickly without having done so. Another important question is, if it
does not dies out quickly, what is the final proportion of individuals hit by the rumour?

We study discrete time stochastic systems on $\bbN=\{0,1,2,\dots\}$ which dynamic
is as follows. First, consider that at time zero all vertices of $\bbN$ are declared inactive,
except for the origin, which is active. It instantly exerts influence on its neighbors vertices,
activating a contiguous random set of them placed on its right. In general, that is the behavior
of every vertex in case it is activated.

We take into account an \textit{homogeneous} and an \textit{heterogeneous} versions for what
we call the \textit{radius of influence} of a vertex. In the \textit{homogeneous} version, as a rule, the
next moment to what it has been activated, each active vertex carries the same (random) behavior of the
origin, independent of it and of everything else.
We also deal with an \textit{heterogeneous} version where each vertex, if activated, has a distinct distribution
for its radius of influence.

We say that the process \textit{survives} if the amount of vertices
activated is infinite. Otherwise we say the process \textit{dies out}. We call this the \textit{Firework
Process}, associating the activation dynamic of a vertex to a rumour process. Vertices
become spreaders as soon as they are activated. Next time, they propagate the rumour and
immediately become stiflers.

A possible variation is what we call \textit{Reverse Firework Process}. In this variation
a vertex, instead of being hit by a rumour, defines a set of neighbors on its left to
which it asks once someone in this set hears the rumour.
We call this variation
\textit{Reverse Firework Process}. We also deal with an \textit{homogeneous} and an \textit{heterogeneous}
versions of this variation. The models are shown to be qualitatively different in some pertinent cases.

Our main interest is to establish whether each process has positive probability of survival which is
equivalent to a rumour propagation.
This is done according to the distribution of the random variable that defines
the radius of influence of each active vertex.

The paper is organized as follows. Section 2 presents the main results.
Section 3 brings the proofs for the main results together with
auxiliary lemmas and handy inequalities. In Section 4 we present examples where
some conditions can be verified.

\section{Main Results}
\label{S: MR}

\subsection{Firework Process}
\label{S: FP}

Consider $\{u_i\}_{i \in \bbN} $ a set of vertices of $\bbN$ such that
$0=u_0 < u_1 < u_2 <  \cdots $ and a set of independent
random variables $\{R_i\}_{i \in \bbN}$ assuming values in $\bbR_+$ whose joint
distribution is $\bbP$. The \textit{Firework Process} can be formally defined
in the following way. At time 0, an explosion
of size $R_0$ comes from the origin, activating all vertices $u_i \leq R_0$.
As a rule, at every discrete time $t$ all vertices  $ u_j $ activated at time $t-1$
generate their explosions (whose \textit{radius of influence} is $R_j$), and
they do this just once, activating the vertices $ u_i $ (only those which has
not been activated before) such that $u_j < u_i \leq u_j + R_j.$ Observe that
except for the set of vertex $\{u_i\}$, all others vertices are non-actionable,
meaning that the random variable associated to them is 0 almost surely.

If for all $u_j$ activated
at time $t-1$ there are no vertices $u_i$ in this latter condition the
process \textit{dies out}. That means the rumour reaches only a finite amount of
individuals. If, on the contrary the process never stops, we say it \textit{survives},
meaning that the rumour reaches an infinity number of individuals.
We call the process \textit{homogeneous} if all $R_i$ have the same
distribution and $u_i=i$ for all $i$. Otherwise we call it \textit{heterogeneous}.
We focus to the cases $\bbP(R_i < 1) \in (0,1)$ for all $i$.

Let us consider the following events
\begin{itemize}
\item $V_n=$ \textit{the vertex $u_n$ is hit by an explosion},
\item $V= \lim_{n \rightarrow \infty } V_n$.
\end{itemize}

\subsubsection{The Homogeneous case}
\label{S: hoc}

\begin{teo}
\label{T: Criterio}
For the \hofp, consider
\[a_n=\prod_{i=0}^{n}\bbP(R < i+1).\]

Then
\[ \sum_{n=1}^{\infty}a_n = \infty \hbox{ if and only if } \bbP[V]=0. \]

Besides
\begin{eqnarray}
\label{PVinequalityGEQ}
&\bbP(V)& \geq \prod_{j=0}^{\infty}\Big[1-\prod_{i=0}^{j}\bbP(R<i+1)\Big], \\
\label{PVinequalityLEQ}
&\bbP(V)& \leq 1 - \bbP(R=0) - \sum_{k=1}^{n}\displaystyle
\left[\bbP(R = k) \prod_{j=0}^{k-1}\bbP[R \leq
j]\right].
\end{eqnarray}
\end {teo}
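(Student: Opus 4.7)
The plan is to recast the survival event via the interval structure of the activated set, establish the two quantitative bounds separately (by FKG and by a direct disjoint decomposition), and deduce the dichotomy from the lower bound together with a complementary argument for the harder direction.

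\textbf{Structural reformulation.} By induction on the generation of the process, the activated set is always an initial segment $\{0,1,\dots,N\}$ of $\bbN$: if $0,\dots,n$ are active and some $j\le n$ satisfies $R_j\ge n+1-j$, then $n+1$ becomes active. Consequently $V_{n+1}\subseteq V_n$, so $\bbP(V)=\lim_n\bbP(V_n)$, and the same induction gives
\[
V_n=\bigcap_{m=1}^{n}G_m,\qquad G_m=\{\exists\, 0\le j<m:\ R_j\ge m-j\}.
\]
Independence of the $R_j$'s yields $\bbP(G_m^c)=\prod_{j=0}^{m-1}\bbP(R<m-j)=\prod_{k=1}^{m}\bbP(R<k)=a_{m-1}$.

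\textbf{Lower bound (\ref{PVinequalityGEQ}).} Each $G_m$ is an increasing event in the independent family $(R_i)_{i\ge 0}$. The Harris--FKG inequality therefore gives
\[
\bbP(V)=\bbP\Bigl(\bigcap_{m\ge 1}G_m\Bigr)\ge\prod_{m\ge 1}\bbP(G_m)=\prod_{m\ge 1}(1-a_{m-1})=\prod_{j=0}^{\infty}(1-a_j).
\]

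\textbf{Upper bound (\ref{PVinequalityLEQ}).} I would lower bound $\bbP(V^c)$ by the probabilities of pairwise disjoint ``early-death'' events. The event $\{R_0=0\}$ kills the process immediately; for each $1\le k\le n$ the event $D_k=\{R_0=k\}\cap\{R_j\le k-j:\ 1\le j\le k\}$ forces the origin to activate exactly $\{1,\dots,k\}$ and no vertex of that set to reach beyond $k$. These events are pairwise disjoint (they fix different values of $R_0$); by independence and reindexing $i=k-j$, $\bbP(D_k)=\bbP(R=k)\prod_{i=0}^{k-1}\bbP(R\le i)$. Summing and subtracting from $1$ gives (\ref{PVinequalityLEQ}).

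\textbf{Equivalence.} Since $a_j\in[0,1)$, the standard product criterion gives $\prod_{j}(1-a_j)>0$ if and only if $\sum_j a_j<\infty$. Combined with (\ref{PVinequalityGEQ}) this yields ``$\sum a_n<\infty\Rightarrow\bbP(V)>0$,'' i.e.\ the contrapositive ``$\bbP(V)=0\Rightarrow\sum a_n=\infty$.'' The reverse direction ``$\sum a_n=\infty\Rightarrow\bbP(V)=0$'' is the genuine obstacle, since letting $n\to\infty$ in (\ref{PVinequalityLEQ}) does not force $\bbP(V)=0$. My plan for it is to introduce the reach-excess Markov chain $Z_l=\max_{0\le k<l}(k+R_k)-(l-1)$, whose recursion $Z_{l+1}=\max(Z_l-1,R_l)$ makes $V=\{Z_l\ge 1\text{ for all }l\}$ a ``not-hitting-$0$'' event. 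From a state $z\ge 1$ the chain descends to $0$ in the next $z$ steps without any upward jump with probability exactly $a_{z-1}$, and iterating this observation across the renewal epochs of successive upward jumps---with a Borel--Cantelli-type handling of the post-jump starting values---should force almost-sure extinction whenever $\sum a_n=\infty$, and hence $\bbP(V)=0$. The delicate point I expect to be the main obstacle is controlling the distribution of the starting value after each jump, which prevents a purely renewal-theoretic treatment.
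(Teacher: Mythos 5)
Your structural reformulation, the FKG lower bound, the disjoint--decomposition upper bound, and the deduction ``$\sum a_n<\infty\Rightarrow\bbP(V)>0$'' all match the paper's argument and are correct. The genuine gap is the direction ``$\sum a_n=\infty\Rightarrow\bbP(V)=0$,'' which you only sketch and where you yourself flag an unresolved obstacle (controlling the post-jump law of your chain $Z_l$). As it stands this direction is not proved: FKG gives only a lower bound on $\bbP\bigl(\bigcap_m G_m\bigr)$, and letting $n\to\infty$ in (\ref{PVinequalityLEQ}) need not give $1$, so something genuinely new is required.

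The missing idea is that the complementary events $B_m=G_m^c=\bigcap_{j=0}^{m-1}\{R_j<m-j\}$ (vertex $m$ is out of reach of every vertex to its left) themselves form a renewal sequence: a direct computation with the independence of the $R_j$ gives, for $i>j$,
\begin{equation*}
\bbP(B_i\cap B_j)=\prod_{l=0}^{j-1}\bbP(R<j-l)\prod_{l=j}^{i-1}\bbP(R<i-l)=\bbP(B_j)\,\bbP(B_{i-j}),
\end{equation*}
with $\bbP(B_m)=a_{m-1}$. This is exactly Feller's notion of a recurrent event, and his theorem (Feller, Vol.~1, Section XIII.3, Theorem 2) states that $\sum_m\bbP(B_m)=\infty$ forces $\bbP(B_m\text{ infinitely often})=1$; since a single occurrence of some $B_m$, $m\ge1$, already places the sample point in $V^c$ (vertex $m$ can never be activated), $\bbP(V)=0$ follows. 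In the language of your chain $Z_l$, the correct regeneration epochs are the zeros of $Z$ (which are precisely the events $B_l$), not the times of upward jumps; at a zero the future of the chain is driven by $R_l,R_{l+1},\dots$ alone with no dependence on a ``starting value,'' which dissolves the obstacle you anticipated. (Alternatively, the identity $\bbP(B_i\cap B_j)=\bbP(B_j)\bbP(B_{i-j})$ feeds a Kochen--Stone second-moment bound giving $\bbP(B_m\text{ i.o.})>0$, which the Kolmogorov zero--one law, applicable because $\{B_m\text{ i.o.}\}$ is a tail event of the independent sequence $(R_i)$, upgrades to $1$.) Without one of these ingredients your proof of the equivalence is incomplete.
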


\begin{cor}
\label{C: FPHo}
For the \hofp, consider
\[ L = \lim_{n \rightarrow \infty}n\bbP(R \geq n). \]

We have that
\begin{enumerate}
\item[\textit{(I)}] If $L > 1$ then $\bbP[V]>0.$
\item[\textit{(II)}] If $L < 1$ then $\bbP[V]=0.$
\item[\textit{(III)}] If $L = 1$ and there exists $N$ such that for all $n \geq N$
\begin{displaymath}
\bbP(R \geq n) \leq \frac{1}{n-1}, \textit{ then } \bbP[V]=0.
\end{displaymath}

\end{enumerate}
\end {cor}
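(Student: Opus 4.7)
The plan is to apply Theorem~\ref{T: Criterio}, which reduces each part to deciding whether the series $\sum_{n \geq 1} a_n$ converges. Writing $p_k := \bbP(R \geq k)$, we have
\[
\log a_n \;=\; \sum_{k=1}^{n+1} \log(1 - p_k),
\]
so the analysis rests on the elementary inequalities $\log(1-x) \leq -x$ on $[0,1)$ and $\log(1-x) \geq -x - x^2$ for $x$ sufficiently small. Because the finite limit $L$ forces $p_k \to 0$, the quadratic contribution is a summable perturbation, and the dominant behaviour of $\log a_n$ is controlled by $\sum p_k$.

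For part (I), I would fix $\varepsilon > 0$ with $L - \varepsilon > 1$; the definition of $L$ gives $p_k \geq (L-\varepsilon)/k$ for all $k$ large, so applying the upper log-bound termwise and summing yields $\log a_n \leq -(L-\varepsilon)\log n + O(1)$, hence $a_n = O(n^{-(L-\varepsilon)})$. Since $L - \varepsilon > 1$ the series converges, and Theorem~\ref{T: Criterio} delivers $\bbP[V] > 0$. For part (II), I would choose $\varepsilon$ with $L + \varepsilon < 1$; then $p_k \leq (L+\varepsilon)/k$ eventually, the lower log-bound (absorbing $\sum p_k^2 < \infty$ into the constant) produces $\log a_n \geq -(L+\varepsilon)\log n - C$, so $a_n \geq c\, n^{-(L+\varepsilon)}$, the series diverges, and $\bbP[V] = 0$. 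Part (III) follows from exactly the same lower estimate with $(L+\varepsilon)/k$ replaced by the explicit hypothesis $1/(k-1)$: since $\sum_{k=N}^{n+1} 1/(k-1) = \log n + O(1)$, one gets $a_n \geq c/n$, the series still diverges, and $\bbP[V] = 0$.

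The delicate point is the borderline regime $L = 1$: there $p_k \sim 1/k$ at first order, placing $a_n$ at the critical scale $n^{-1}$, where the convergence of $\sum a_n$ depends on logarithmic corrections to $p_k$ rather than on $L$ itself. The additional assumption $p_k \leq 1/(k-1)$ in (III) sidesteps this precisely by supplying an explicit majorant whose harmonic-type partial sums are still $\log n + O(1)$; without such a supplement, $L = 1$ alone can leave the series on either side of the test. No analogous hypothesis is needed in (I) or (II), because the strict inequalities $L > 1$ and $L < 1$ yield polynomial slack that comfortably absorbs all second-order errors from the expansion of $\log(1-p_k)$.
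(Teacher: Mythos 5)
Your argument is correct and makes the same reduction as the paper: both proofs rest entirely on Theorem~\ref{T: Criterio}, so everything comes down to deciding whether $\sum_n a_n$ converges. You settle that question by a different, more self-contained route. The paper computes $n\bigl(a_n/a_{n+1}-1\bigr) = n\,\bbP(R\geq n+2)/\bbP(R< n+2) \to L$ and then invokes Raabe's test as a black box, with the hypothesis of part (III) calibrated exactly so that $n\bigl(a_n/a_{n+1}-1\bigr)\leq 1$ eventually. You instead re-derive the needed asymptotics by hand: writing $\log a_n=\sum_{k=1}^{n+1}\log(1-p_k)$ with $p_k=\bbP(R\geq k)$ and using $-x-x^2\leq\log(1-x)\leq -x$ (the quadratic error being summable because $p_k=O(1/k)$), you pin $a_n$ at the polynomial scale $n^{-L+o(1)}$, and in the critical case the explicit majorant $1/(k-1)$ still gives $a_n\geq c/n$. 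This is essentially an inlined proof of the special case of Raabe's test that the paper needs; it costs a little more computation but removes the external citation and makes visible why $L=1$ is the genuinely delicate regime. Two small points to tidy: in part (I) the choice of $\varepsilon$ with $L-\varepsilon>1$ tacitly assumes $L<\infty$ (if $L=+\infty$, just use $p_k\geq 2/k$ eventually); and you should record that $p_k<1$ for every $k\geq 1$ (from the standing assumption $\bbP(R<1)>0$), so that each $\log(1-p_k)$ is finite and the finitely many initial terms where $p_k$ is not small can be absorbed into the $O(1)$.
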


\begin{rem}
\label{R: ERfiniteDyes}
Consider a \hofp\ with $R$ assuming values on $\bbN.$
Observe that, in this case, if $\bbE[R] < \infty $ then $L=0$. Consequently
for $R$ assuming values on $\bbN,$

\[ \bbE[R] < \infty \Rightarrow  \bbP[V]=0.\]
\end{rem}

Next result gives a criteria for the case when the distribution
of the random variable $R$ is a power law.

\begin{cor}
\label{C: lei de potencia}
Let $\alpha > 1$ and $Z_{\alpha}$ be an appropriate constant.
Consider the \hofp\ such that
\begin{equation}
\label{E: Power Law}
\bbP(R = k) = \frac{Z_{\alpha}}{(k+1)^{\alpha}} \textrm {
for } k \in \bbN.
\end{equation}

\begin{itemize}
\item[\textit{(I)}] If $\alpha < 2$ then $\bbP[V]>0.$
\item[\textit{(II)}] If $\alpha \geq 2$ then $\bbP[V]=0.$
\end{itemize}
\end{cor}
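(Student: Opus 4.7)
The plan is to reduce the corollary to a direct application of Corollary~\ref{C: FPHo} by identifying the limit $L = \lim_{n\to\infty} n\,\bbP(R\geq n)$ for the power-law distribution. Everything hinges on the asymptotics of the tail probability $\bbP(R\geq n) = Z_\alpha\sum_{j\geq n+1} j^{-\alpha}$, so the first step is to sandwich this sum between the integrals $\int_{n+1}^\infty x^{-\alpha}\,dx$ and $\int_n^\infty x^{-\alpha}\,dx$ to obtain
\[
\frac{Z_\alpha}{\alpha-1}(n+1)^{1-\alpha} \;\leq\; \bbP(R\geq n)\;\leq\; \frac{Z_\alpha}{\alpha-1}\,n^{1-\alpha},
\]
so that $\bbP(R\geq n)\sim \frac{Z_\alpha}{\alpha-1}\,n^{1-\alpha}$ as $n\to\infty$.

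With this asymptotic in hand, the value of $L$ in each subregime will be immediate. For $1<\alpha<2$ the factor $n\cdot n^{1-\alpha}=n^{2-\alpha}$ diverges, giving $L=\infty$, so Corollary~\ref{C: FPHo}(I) delivers $\bbP[V]>0$. For $\alpha>2$ the same factor tends to $0$, giving $L=0$, and Corollary~\ref{C: FPHo}(II) delivers $\bbP[V]=0$.

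The one case that demands genuine arithmetic is the boundary $\alpha=2$, where the exponent $2-\alpha$ vanishes and $L$ coincides with the normalizing constant. I would use $\sum_{k=0}^\infty(k+1)^{-2}=\pi^2/6$ to identify $Z_2 = 6/\pi^2$, and then the sandwich above shows $L = Z_2 = 6/\pi^2$; since $6/\pi^2 < 1$, a second invocation of Corollary~\ref{C: FPHo}(II) closes this case. The main (and only) obstacle is therefore this boundary point: one cannot get away with comparing exponents alone, but must pin down the exact value of $Z_2$. Note that part (III) of Corollary~\ref{C: FPHo} is never needed, since $L$ stays strictly away from $1$ for every $\alpha>1$.
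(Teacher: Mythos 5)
Your proposal is correct and follows essentially the same route as the paper: an integral sandwich for the tail $\bbP(R\geq n)$, identification of $L$ as $+\infty$, $6/\pi^2$, or $0$ according to whether $\alpha<2$, $\alpha=2$, or $\alpha>2$, and an appeal to Corollary~\ref{C: FPHo}. Your observation that the boundary case $\alpha=2$ is settled by $Z_2=6/\pi^2<1$ via part (II), with no need for part (III), matches the paper's argument exactly.
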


\begin{rem}
\label{R: PowerLaw}
Observe that for the \hofp\ if $R$ has a power law distribution
as in~(\ref{E: Power Law}), with
$\alpha =2$, we have that

\[ \bbE[R] = \infty \hbox{ and } \bbP[V]=0. \]
\end{rem}

\subsubsection{The Heterogeneous case}

\begin{rem}
\label{R: PFHeMorte} Consider the \hefp. One can get a
sufficient condition for $\bbP[V]=0$ ($\bbP[V]>0$) by a coupling argument.
Consider $\bbP(R_i \geq k ) \leq \mathbf{P}(R \geq k)$
($\bbP(R_i \geq k ) \geq \mathbf{P}(R \geq k)$)
for some random variable $R$ which distribution $\mathbf{P}$ satisfies
$\lim_{n \rightarrow \infty}n \mathbf{P}(R \geq n) < 1$
($\lim_{n \rightarrow \infty}n \mathbf{P}(R \geq n) > 1$).
Finally use part ($II$) (part ($I$)) of Corollary~\ref{C: FPHo}.
\end{rem}

\begin {teo}
\label{T: PFHeVive}
Consider a \hefp\ which actionable vertices are at integer positions
$0=u_0 < u_1 < u_2 < \dots $ such that $u_{n+1} - u_n \leq m$, for $m \geq 1.$
Besides, let us assume $\bbP (R_{n} < m) \in (0,1)$ for all $n.$
\begin{enumerate}
\item[\textit{(I)}] If $\sum_{n=0}^{\infty}[\bbP(R_{n} < tm)]^t < \infty$ for some $t \geq 1$
then $\bbP[V]>0.$
\item[\textit{(II)}] If for some random variable $R,$ which distribution is $\mathbf{P},$
the following conditions hold
\begin{itemize}
\item $\mathbf{P}(R \geq k) - \bbP(R_{n} \geq k) \leq b_k$
for all $ k \geq 0$ and all $n \geq 0,$
\item $\lim_{n \rightarrow \infty}n[\mathbf{P}(R \geq n) - b_n] > m,$
\item $\lim_{n \rightarrow \infty}b_n = 0.$
\end{itemize}
Then $\bbP[V]>0.$
\item[\textit{(III)}] $\bbP(V) \geq
\prod_{j=0}^{\infty}\Big[1-\prod_{i=0}^{j}\bbP(R_{j-i} < (i+1)m)\Big].$
\end{enumerate}
\end {teo}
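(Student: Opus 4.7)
The backbone is part~(III); once it is established, parts~(I) and~(II) follow by checking, via elementary estimates, that the infinite product in~(III) is strictly positive under their respective hypotheses. For~(III), I introduce for each $j \geq 0$ the ``reach'' event
\[
D_j = \bigl\{\exists\, k \in \{0,1,\dots,j\} :\ R_k \geq (j-k+1)m \bigr\}.
\]
Using the single ingredient $u_{j+1}-u_k \leq (j+1-k)m$ supplied by the gap hypothesis $u_{n+1}-u_n \leq m$, the crucial observation is that on $\bigcap_{j \geq 0} D_j$ every vertex becomes active. This is shown by induction on $\ell$: $u_0$ is active by definition, and if $u_0,\dots,u_\ell$ are active, then $D_\ell$ supplies some $k \leq \ell$ with $R_k \geq (\ell-k+1)m \geq u_{\ell+1}-u_k$, so the explosion at $u_k$ activates $u_{\ell+1}$. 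Hence $V \supseteq \bigcap_j D_j$. By independence of the $R_n$'s,
\[
\bbP(D_j^c) = \prod_{i=0}^{j} \bbP\bigl(R_{j-i} < (i+1)m\bigr),
\]
which lies in $[0,1)$ since the $i=0$ factor $\bbP(R_j < m)$ lies in $(0,1)$. Each $D_j$ is increasing in the coordinates $(R_0,R_1,\dots)$ under the product order, so the FKG inequality applied iteratively gives $\bbP\bigl(\bigcap_{j=0}^{N} D_j\bigr) \geq \prod_{j=0}^{N} \bbP(D_j)$, and letting $N \to \infty$ by continuity of measure yields the bound in~(III).

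\textbf{From (III) to (I) and (II).} Write $q_j := \bbP(D_j^c)$, so (III) reads $\bbP(V) \geq \prod_j (1-q_j)$; since each $q_j < 1$, positivity is equivalent to $\sum_j q_j < \infty$. For~(I), since $(i+1)m \leq tm$ for $i \leq t-1$ and each factor is at most $1$,
\[
q_j \leq \prod_{i=0}^{t-1} \bbP(R_{j-i} < tm) \leq \frac{1}{t}\sum_{i=0}^{t-1}[\bbP(R_{j-i} < tm)]^{t},
\]
the last step being AM--GM ($\prod_{i=0}^{t-1} a_i \leq t^{-1}\sum a_i^{t}$); summing in $j$ and reindexing gives $\sum_j q_j \leq \sum_{n \geq 0}[\bbP(R_n < tm)]^{t} < \infty$. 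For~(II), the bound $\bbP(R_n < k) \leq \mathbf{P}(R < k) + b_k$ together with $\lim_n n[\mathbf{P}(R \geq n)-b_n] > m$ yields, for some $\epsilon > 0$ and all large $i$,
\[
\bbP(R_{j-i} < (i+1)m) \leq 1 - \frac{1+\epsilon/m}{i+1},
\]
whose tail product $\prod_{i \geq i_0}[1-(1+\epsilon/m)/(i+1)]$ decays polynomially like $j^{-(1+\epsilon/m)}$, so again $\sum_j q_j < \infty$.

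\textbf{Main difficulty.} The conceptual crux is identifying the event $D_j$: its threshold $(j-k+1)m$ is chosen precisely so that the $m$-bounded gap hypothesis converts ``there exists an active vertex with long enough radius'' into ``the next vertex actually gets activated'' inside the induction. The technical step is the FKG application, which exploits independence of the $R_n$'s together with monotonicity of the $D_j$; everything else is routine. Once (III) is in hand, parts~(I) and~(II) reduce to convergence diagnoses for the product $\prod_j(1-q_j)$.
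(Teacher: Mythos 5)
Your proof is correct and follows essentially the same route as the paper: part~(III) via the FKG inequality applied to the increasing ``reach'' events (the paper's $\bigcup_{i=0}^{j}\{R_{j-i}\geq (i+1)m\}$ is exactly your $D_j$), and parts~(I) and~(II) by showing $\sum_j q_j<\infty$ so that the infinite product is positive. The only differences are cosmetic: you use AM--GM where the paper bounds the product by the $t$-th power of the maximum factor, and in~(II) you estimate the tail product directly where the paper invokes Raabe's test on the majorizing sequence $r_n=\prod_{j=0}^{n}[\mathbf{P}(R<(j+1)m)+b_{(j+1)m}]$.
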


\subsection{Reverse Firework Process}
\label{S: RFP}

Consider $\{u_i\}_{i \in \bbN}$ a set of vertices of $\bbN$ such that
$0=u_0 < u_1 < u_2 <  \cdots $ and a set of independent
random variables $\{R_i\}_{i \in \bbN}$ assuming values in $\bbN$ which joint
distribution is $\bbP$. The \textit{Reverse Firework Process} can be defined as
follows. At time 0, only the origin is activated. At time 1, explosions of size
$R_i $ towards the origin, come from all vertices of $\{u_i\}_{i \in \bbN}.$
All vertices $u_i \leq R_i$ are activated. As a rule, at discrete times $t$
the set of vertices $ u_j $ which can find a vertex activated at time $t-1$ within
a distance $R_j$ to its left, are activated. Let us call this set $A_t$.
If for some $t$, $A_t$ is empty the process stops. If the process never stops we say it survives.
We call the process \textit{homogeneous} if all $R_i$ have the same
distribution and $u_i=i$ for all $i$, otherwise we call it \textit{heterogeneous}.
We focus to the cases $\bbP(R_i < 1) \in (0,1)$ for all $i$. Unless stated
differently, we assume $u_i = i$ for all $i$.

Let $S$ be the event ``\textit{the reverse process survives}".

\subsubsection{The homogeneous case}
\label{S: horc}

\begin{teo}
\label{T: horfp}
Consider the \horfp. We have that
\begin{enumerate}
\item[\textit{(I)}] If $\mathbb{E}(R) = \infty$ then $\bbP(S) = 1.$
\item[\textit{(II)}] If $\mathbb{E}(R) < \infty$ then $\bbP(S) = 0.$
\end{enumerate}
\end{teo}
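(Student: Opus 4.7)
\emph{Proof plan.} The plan is to extract an i.i.d.\ renewal structure from the \horfp: between two consecutive activated sites only the left one is ever relevant for activating new vertices, so the gaps between consecutive activated sites are i.i.d.\ and the problem reduces to a first-moment dichotomy.

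First I would establish the following description of the activated set. A vertex $j \ge 1$ is eventually activated if and only if $R_j \ge j - L_j$, where $L_j$ is the largest activated site strictly less than $j$ (with $L_1 = 0$). Indeed, $L_j$ is the closest active predecessor of $j$, hence the easiest target for $j$'s left-looking range $R_j$; any farther active site is strictly harder to reach. Listing the active sites as $0 = a_0 < a_1 < a_2 < \cdots$, no index in $(a_{i-1}, a_i)$ is active, so $L_d = a_{i-1}$ for every $d \in (a_{i-1}, a_i]$ and therefore
\[
N_i \;:=\; a_i - a_{i-1} \;=\; \min\{k \ge 1 : R_{a_{i-1}+k} \ge k\},
\]
with $\min\emptyset = \infty$ (in which case $a_i$ does not exist and the process dies).

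Next I would exploit that $a_{i-1}$ is measurable with respect to $R_1,\dots,R_{a_{i-1}}$ while $N_i$ is determined by $R_{a_{i-1}+1}, R_{a_{i-1}+2},\dots$. A standard strong-Markov-style argument for i.i.d.\ sequences then shows that $N_1, N_2, \dots$ are i.i.d.\ with common law
\[
\bbP(N = \infty) \;=\; p \;:=\; \prod_{k=1}^{\infty}\bbP(R < k).
\]
Survival is exactly the event $\bigcap_{i \ge 1}\{N_i < \infty\}$. If $p > 0$, then $\bbP(S) \le (1-p)^k$ for every $k$, so $\bbP(S) = 0$; if $p = 0$, each $N_i$ is a.s.\ finite and $\bbP(S) = 1$ by countable intersection.

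Finally I would link $p$ to $\bbE(R)$. Under the standing hypothesis $\bbP(R < 1) \in (0,1)$ one has $\bbP(R \ge k) \le \bbP(R \ge 1) < 1$ for every $k \ge 1$, so applying the classical dichotomy $\prod_k(1 - x_k) = 0 \Leftrightarrow \sum_k x_k = \infty$ to $x_k = \bbP(R \ge k)$, together with the standard identity $\bbE(R) = \sum_{k \ge 1}\bbP(R \ge k)$ for $\bbN$-valued $R$, gives $p = 0 \Leftrightarrow \bbE(R) = \infty$. Combining the two previous steps yields both assertions. The main obstacle is just recognizing the renewal decomposition; once in place, everything reduces to bookkeeping with the first moment of $R$.
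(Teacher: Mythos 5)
Your argument is correct, and it takes a genuinely different route from the paper. You prove both halves simultaneously through a single renewal decomposition: the gaps $N_i$ between consecutive activated sites are i.i.d.\ with $\bbP(N=\infty)=p:=\prod_{k\ge 1}\bbP(R<k)$, so $\bbP(S)=\lim_{k}(1-p)^k\in\{0,1\}$ and the dichotomy is governed by $\sum_{k\ge1}\bbP(R\ge k)=\bbE(R)$ via the product criterion. The paper instead treats the two directions separately: for (I) it writes $S=\bigcap_{n}\bigcup_{j\ge1}[R_{n+j}\ge j]$ and applies the FKG inequality to obtain $\bbP(S)\ge\prod_{n}\bigl[1-\prod_{j}\bbP(R_{n+j}<j)\bigr]$, each factor equalling $1$ when $\bbE(R)=\infty$; for (II) it reflects the reverse process onto a forward Firework Process in which every vertex of $\{\dots,-1,0\}$ is initially active, and kills that process with a doubling argument ($\mathcal{V}_{2n}\subseteq\mathcal{V}_n\cap[\mathcal{A}_n\cup\mathcal{B}_n]$, forcing $\bbP(\mathcal{V}_n)\to 0$). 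Your approach is more elementary (no FKG, no reflection) and yields more information: the number of activated sites beyond the origin is exactly geometric with parameter $p$, so the zero--one law for $S$ is immediate. The price is generality: the identically-distributed gaps rely on homogeneity and $u_i=i$, whereas the paper's FKG bound carries over verbatim to the heterogeneous case and is reused there in Theorem~\ref{T: PFRHe}. Two small points to make explicit in a written version: the identification of survival with ``the set of eventually activated vertices is infinite,'' i.e.\ with $\bigcap_i\{N_i<\infty\}$ (the same identification the paper makes in~(\ref{E: Sbonito})), and the standard strong-Markov bookkeeping showing $\bbP(N_1<\infty,\dots,N_k<\infty)=(1-p)^k$; neither presents any difficulty.
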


\begin{rem}
\label{T: Hetero Power Law}
For a random variable $R,$ having a power law distribution as in~(\ref{E: Power Law}), we have that

\begin{itemize}
\item if $1 < \alpha \leq 2$ then $\bbE[R] = \infty,$
\item if $\alpha > 2$ then $\bbE[R] < \infty.$
\end{itemize}

In conclusion, if $R$ has a power law distribution as in~(\ref{E: Power Law}),
with $\alpha=2$, then $\bbP[V] = 0$ for the \hofp\ by Remark~\ref{R: PowerLaw} and
$\bbP[S] = 1$ for the \horfp.

\end{rem}

\subsubsection{The heterogeneous case}
\begin{teo}
\label{T: PFRHe} Consider the \herfp. It holds that
\begin{enumerate}
\item[\textit{(I)}] $\sum_{k=1}^{\infty}\bbP(R_{n+k} \geq k) =
\infty$ for all $n$ if and only if $\bbP(S) = 1.$
\item[\textit{(II)}] If
$\sum_{n=1}^{\infty}\prod_{k=1}^{\infty}\bbP(R_{n+k} < k) <
\infty$ then $\bbP(S)> 0.$
\end{enumerate}
\end{teo}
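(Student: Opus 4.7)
Let $E_n$ be the event that $u_n$ is ever activated (so $\bbP(E_0)=1$) and set
\[
q_n:=\prod_{k=1}^{\infty}\bbP(R_{n+k}<k).
\]
The key identity I aim to derive is
\[
\bbP(S^c)=\sum_{n=0}^{\infty}\bbP(E_n)\,q_n,
\]
together with the ancillary estimate $\bbP(E_n)>0$ for every $n$. The latter is an easy induction from the inclusion $E_n\supseteq E_{n-1}\cap\{R_n\geq 1\}$ and the standing hypothesis $\bbP(R_n\geq 1)>0$, giving $\bbP(E_n)\geq \prod_{j=1}^{n}\bbP(R_j\geq 1)>0$. For the identity, I will partition $S^c$ disjointly according to the index of the last activated vertex and establish $\bbP(u_n\text{ is last activated})=\bbP(E_n)\,q_n$. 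The content is a step-by-step unfolding of the dynamics: on $E_n$, so long as $u_{n+1},\ldots,u_{n+k-1}$ remain inactive the closest activated vertex to the left of $u_{n+k}$ is $u_n$, at distance $k$, so $u_{n+k}$ fails to activate iff $R_{n+k}<k$; independence of $\{R_j\}_{j>n}$ from the past then yields the product form $q_n$ for the conditional probability, and this value does not depend on the history before $u_n$.

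Part (I) follows at once. The standard infinite-product equivalence $\prod p_k=0 \Leftrightarrow \sum(1-p_k)=\infty$ (for $p_k\in(0,1]$) gives $q_n=0$ iff $\sum_{k=1}^{\infty}\bbP(R_{n+k}\geq k)=\infty$. Since $\bbP(E_n)>0$, the identity for $\bbP(S^c)$ shows $\bbP(S)=1$ iff $\bbP(E_n)\,q_n=0$ for every $n$, iff $q_n=0$ for every $n$, which is precisely the divergence condition of (I) holding for all $n$.

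For Part (II) the crude bound $\bbP(S^c)\leq\sum_n q_n$ is insufficient, because the right-hand side need not be $<1$. The remedy is a restart argument. Conditionally on $E_m$, the process observed on $\{u_n:n\geq m\}$ is itself a \herfp\ rooted at $u_m$ with radii $R_{m+1},R_{m+2},\ldots$, so applying the identity to this shifted process yields
\[
\bbP(S^c\mid E_m)\leq\sum_{n\geq m}q_n.
\]
Under the hypothesis $\sum_n q_n<\infty$, choose $m$ so large that $\sum_{n\geq m}q_n<1$; then $\bbP(S\mid E_m)>0$ and $\bbP(S)\geq \bbP(E_m)\,\bbP(S\mid E_m)>0$. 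The most delicate step in the whole plan is the careful inductive unfolding that justifies $\bbP(u_n \text{ is last activated})=\bbP(E_n)\,q_n$, together with its restart form used in (II); once this bookkeeping is in place, both parts follow with no further ideas.
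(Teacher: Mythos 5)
Your argument is correct, but it follows a genuinely different route from the paper's. The paper first identifies $S=\bigcap_{n\ge0}\bigcup_{j\ge1}[R_{n+j}\ge j]$ and then works with two one\--sided bounds: the FKG inequality gives $\bbP(S)\ge\prod_{n}\bigl[1-\prod_{j}\bbP(R_{n+j}<j)\bigr]$, which together with the product--sum lemma~(\ref{E: CD}) yields part (II) and the ``if'' half of (I), while the trivial inclusion $S\subseteq\bigcup_{j}[R_{n+j}\ge j]$ gives the ``only if'' half. You instead decompose $S^c$ disjointly by the index of the last activated vertex and prove the \emph{exact} identity $\bbP(S^c)=\sum_{n}\bbP(E_n)q_n$; this rests on the correct observations that $\{u_n\text{ is last}\}=E_n\cap\bigcap_{j\ge1}[R_{n+j}<j]$ and that $E_n$ is measurable with respect to $R_1,\dots,R_n$ while the tail event depends only on $R_{n+1},R_{n+2},\dots$ (a point worth stating explicitly, since it is what legitimizes the factorization $\bbP(E_n)q_n$). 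From the identity, both directions of (I) drop out at once, with no correlation inequality needed; for (II) your restart at a vertex $m$ with $\sum_{n\ge m}q_n<1$ replaces FKG, using $\bbP(\{u_n\text{ last}\}\cap E_m)=\bbP(E_n\cap E_m)\,q_n\le\bbP(E_m)q_n$ for $n\ge m$. What each approach buys: yours is self\--contained and exact, and makes the ``only if'' direction of (I) automatic; the paper's FKG bound gives the explicit quantitative lower bound $\bbP(S)\ge\prod_n(1-q_n)$ valid without choosing a cutoff, whereas your restart gives the (still explicit, if you unwind it) bound $\bbP(S)\ge\bbP(E_m)\bigl(1-\sum_{n\ge m}q_n\bigr)$. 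Both arguments share the same minor blind spot as the paper regarding the degenerate case $\bbP(R_{n+k}\ge k)=1$ for some $k\ge2$, where the equivalence~(\ref{E: CD}) between a vanishing product and a divergent sum can fail; this does not affect the substance of your proof.
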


\begin{rem}
Let $\rho = \sum_{n=1}^{\infty}\prod_{k=1}^{\infty}\bbP(R_{n+k} < k).$
Observe now that Theorem~\ref{T: PFRHe} gives no additional information for Theorem~\ref{T: horfp},
as in the homogeneous case $\rho$ equals either 0 ($\bbE[R]=\infty$) or $\infty$
($\bbE[R] < \infty$).
\end{rem}

\begin{rem}
\label{R: PFRHe}
By a coupling argument and Theorem~\ref{T: horfp} one can see that
if there is a random variable $R$, which distribution is $\mathbf{P}$, with
$\bbE[R] < \infty$ ($\bbE[R] = \infty$), such that
$\bbP(R_{n} \geq k) \leq \mathbf{P}(R \geq k)$
($\bbP(R_{n} \geq k) \geq \mathbf{P}(R \geq k)$) for all $k$
then $\bbP(S) = 0$ ($\bbP(S) = 1$).
\end{rem}

\section{Proofs}
\label{S: proofs}

Next we present some basic facts, starting from
the Raabe�s test (Fort~\cite[p.~32]{Fort} or Bonar and Khoury~\cite[p.~48]{BK}).

\begin{raabe}
For $a_n > 0,$ let us define
\begin{displaymath}
L = \lim_{n \rightarrow \infty} n \displaystyle \left (\frac{a_n}{a_{n+1}} - 1 \right).
\end{displaymath}
Then
\begin{itemize}
\item If $L > 1$ then $\sum_{n=1}^{\infty}a_n < \infty.$
\item If $L < 1$ then $\sum_{n=1}^{\infty}a_n = \infty.$
\item If $L = 1$ and $n \Big({{a_n}/{a_{n+1}}} - 1 \Big) \leq 1,$
for $n$ large enough, \\ then $\sum_{n=1}^{\infty}a_n = \infty.$
\end{itemize}
\end{raabe}

The following result (Bremaud~\cite[p.~422]{Bremaud}) is useful for what
comes next

\begin{lem}
Let $\{a_n\}_{n \geq 1}$ be a sequence of real numbers in $(0,1).$ Then,

\begin{equation}
\label{E: CD}
\prod_{i=0}^{\infty}(1-a_k) = 0 \Leftrightarrow \sum_{i=0}^{\infty} a_k = \infty.
\end{equation}
\end{lem}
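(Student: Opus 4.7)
The plan is to prove both implications by comparing the product with $\sum a_k$ via the elementary inequalities relating $1-x$ and $e^{-x}$ on $(0,1)$. Throughout I will work with the partial products $P_n = \prod_{k=0}^{n}(1-a_k)$, which form a decreasing sequence bounded below by $0$, and so have a limit $P_{\infty} \in [0,1)$.

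For the direction ``$\Leftarrow$'' (if $\sum a_k = \infty$ then $\prod(1-a_k)=0$), I would use the standard inequality $1-x \leq e^{-x}$ valid for all real $x$. Applying this termwise yields
\[
0 \leq P_n = \prod_{k=0}^{n}(1-a_k) \leq \prod_{k=0}^{n} e^{-a_k} = \exp\Bigl(-\sum_{k=0}^{n} a_k\Bigr).
\]
Letting $n \to \infty$ and using $\sum a_k = \infty$ forces $P_\infty = 0$.

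For the direction ``$\Rightarrow$'' I prove the contrapositive: if $\sum a_k < \infty$ then $\prod(1-a_k) > 0$. Since the series converges, $a_k \to 0$, so there is an index $N$ with $a_k \leq 1/2$ for every $k \geq N$. On the interval $[0,1/2]$ one has the elementary bound $\log(1-x) \geq -2x$ (most easily verified by noting both sides vanish at $0$ and comparing derivatives, or equivalently from $\log(1-x) = -\int_0^x (1-t)^{-1}\,dt \geq -2x$ for $x \leq 1/2$). Hence for every $n \geq N$,
\[
\log \prod_{k=N}^{n}(1-a_k) \;=\; \sum_{k=N}^{n} \log(1-a_k) \;\geq\; -2 \sum_{k=N}^{n} a_k \;\geq\; -2 \sum_{k=N}^{\infty} a_k \;>\; -\infty.
\]
Exponentiating shows that the tail product $\prod_{k \geq N}(1-a_k)$ is bounded below by a strictly positive constant, and multiplying by the finite nonzero factor $\prod_{k=0}^{N-1}(1-a_k)$ (nonzero because each $a_k \in (0,1)$) yields $P_\infty > 0$.

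There is no real obstacle here; the only delicate point is the choice of the one-sided bound $\log(1-x) \geq -2x$ on $[0,1/2]$, which is what lets the ``$\Rightarrow$'' direction avoid any reference to $a_k$ being small from the start. Both implications together give the stated equivalence.
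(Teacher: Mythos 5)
Your proof is correct and complete: the inequality $1-x\leq e^{-x}$ handles the divergence direction, and the bound $\log(1-x)\geq -2x$ on $[0,1/2]$, applied to the tail once $a_k\to 0$, handles the converse. The paper does not prove this lemma at all --- it simply cites Bremaud --- and your argument is the standard textbook proof of that cited fact, so there is nothing to reconcile.
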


\begin{rem}
\label{R: observe1}
Consider that the actionable vertices are at integer positions
$0=u_0 < u_1 < u_2 < \dots $ such that $u_{n+1} - u_n \leq m$, for $m \geq 1.$
From the definition of $V_n$ one can see that
\begin{itemize}
\item $V_{k+1} \supset V_{k}
\bigcap \left \{ \bigcup_{i=0}^k (R_{k-i} \geq (i+1)m ) \right \},$
\item $V_{k}$ e $\bigcup_{i=0}^k (R_{k-i} \geq (i+1)m)$ are increasing events,
\item $ \bbP (V_{n}) > 0$ for all $n.$
\end{itemize}
\end{rem}
From FKG inequality (Grimmett~\cite[p.34]{Grimmett}) we can assure that
\begin{eqnarray}
\label{E: uii}
\bbP(V_{k+1}) &\geq& \bbP(V_{k}
\cap \left \{ \cup_{i=0}^k (R_{k-i} \geq (i+1)m ) \right \}) \\
\nonumber & \geq & \bbP(V_{k})
\bbP(\left \{ \cup_{i=0}^k (R_{k-i} \geq (i+1)m ) \right \}) \\
\nonumber & = & \bbP(V_{k}) \Big[1-\prod_{i=0}^{k}\bbP(R_{k-i} < (i+1)m)\Big]
\end{eqnarray}
\noindent
and then
\begin{equation*}
\label{E: Vn}
\bbP(V_n) \geq
\prod_{j=0}^{n-1}\Big[1-\prod_{i=0}^{j}\bbP(R_{j-i} < (i+1)m)\Big].
\end{equation*}
Therefore
\begin{equation}
\label{E: V}
\bbP(V) \geq
\prod_{j=0}^{\infty}\Big[1-\prod_{i=0}^{j}\bbP(R_{j-i} < (i+1)m)\Big].
\end{equation}
Inequality~(\ref{E: uii}) becomes an equality if $u_i=mi$ for all $i \in \bbN$ and
some $m \in \bbN.$
From the latter set of displays and~(\ref{E: CD}) follows next
proposition.
\begin{prop}
\label{P: PFmae} Consider a \hefp\ which actionable vertices are
at integer positions $0=u_0 < u_1 < u_2 < \cdots $ such that $
u_{n+1}-u_n \leq m.$ Let $a_n = \prod_{i=0}^{n}\bbP(R_{n-i} <
(i+1)m)$ and assume $\bbP(R_{i} < m) \in (0,1).$
\begin{equation}
\label{E: PFmae}
\hbox{If } \sum_{n=0}^{\infty}a_n < \infty  \hbox{ then } \bbP[V]>0.
\end{equation}
\end {prop}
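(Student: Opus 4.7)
The proposition is essentially an immediate consequence of the inequality~(\ref{E: V}) and the lemma~(\ref{E: CD}) that have already been established in the excerpt, so my plan is simply to assemble those two ingredients carefully, checking the hypotheses needed to apply them.

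First, I would recall that under the assumption $u_{n+1}-u_n\le m$, the argument leading to~(\ref{E: V}) (using Remark~\ref{R: observe1} and the FKG inequality) gives
\[
\bbP(V)\;\geq\;\prod_{j=0}^{\infty}\bigl[1-a_j\bigr],
\]
where $a_j=\prod_{i=0}^{j}\bbP(R_{j-i}<(i+1)m)$ is exactly the sequence defined in the statement. This inequality does not depend on whether $\sum a_n$ converges or diverges, so it is available unconditionally.

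Next I would verify that the sequence $\{a_j\}$ lies in $(0,1)$, which is the hypothesis needed to invoke~(\ref{E: CD}). Positivity is clear because each factor $\bbP(R_{j-i}<(i+1)m)$ is bounded below by $\bbP(R_{j-i}<m)$, and the standing assumption $\bbP(R_n<m)\in(0,1)$ gives $\bbP(R_{j-i}<m)>0$ for every $j$ and $i$. For the strict upper bound, the $i=0$ factor in $a_j$ is $\bbP(R_j<m)<1$, so $a_j<1$ as well.

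Finally, I would apply the contrapositive of~(\ref{E: CD}) to the sequence $\{a_j\}$: the hypothesis $\sum_{n=0}^{\infty}a_n<\infty$ forces
\[
\prod_{j=0}^{\infty}(1-a_j)\;>\;0,
\]
and combining this with the lower bound on $\bbP(V)$ from~(\ref{E: V}) yields $\bbP[V]>0$, as required. There is no real obstacle here — the only thing that could potentially trip one up is making sure each $a_j$ genuinely belongs to $(0,1)$ so that the cited lemma applies, and this is handled precisely by the hypothesis $\bbP(R_i<m)\in(0,1)$ together with the presence of the $i=0$ factor in $a_j$.
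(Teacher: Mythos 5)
Your proposal is correct and is essentially the paper's own argument: the authors likewise deduce the proposition directly from the lower bound~(\ref{E: V}) together with the product--series equivalence~(\ref{E: CD}). Your explicit check that each $a_j$ lies in $(0,1)$ (via the $i=0$ factor and the hypothesis $\bbP(R_i<m)\in(0,1)$) is a detail the paper leaves implicit, but the route is the same.
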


\noindent
\subsection{Firework Process}

\begin{proof}[Proof of Theorem~\ref{T: Criterio}]
Assume $\sum_{n=0}^{\infty} a_n < \infty.$ From
Proposition~\ref{P: PFmae}, with $m=1$, we have that $\bbP[V]>0.$

Assume now $\sum_{n=0}^{\infty} a_n = \infty.$ First consider the event
\[ C = \{ \exists n \hbox{ such that } \forall u_i>n \ \exists
x \hbox{ such that } x < u_i \leq x + R_x \}.\]

In words that means that from some position on, all vertex belong to the radius of
influence of some other vertices. Those later vertices not necessarily have been activated.

Next, consider the following event
\[ B(u_n) = \{ u_n > x + R_x, \hbox{ for all } x < u_n \} \]

In words, the vertex $u_n$ does not belong to the radius of influence of any
vertex to its left.

Assuming all independent random variables having the same distribution as $R$ and that
$u_i=i$ ($B_n = B(u_n)$),
\begin{align*}
\bbP(B_n) = \bbP \left(\bigcap_{i=1}^{n}[R_{n-i} < i] \right) =
\prod_{i=1}^{n}\bbP(R < i) = a_{n-1}.
\end{align*}
Conditional independence of the $B_i$s as stated next, for $i>j$
\begin{eqnarray*}
\bbP(B_i \cap B_j) &=& \mathbb{P}\left(
\bigcap_{k=1}^{i-j}[R_{i-k} < k] \cap \bigcap_{k=1}^{j}[R_{j-k} < k] \right)\\
&=& \prod_{k=1}^{i-j}\mathbb{P}(R < k)\prod_{k=1}^{i}\mathbb{P}(R < k)\\
&=& \bbP(B_{i-j})\bbP(B_j)
\end{eqnarray*}
\noindent
makes the $B_i$s satisfy the definition of a renewal event in~\cite[p.308]{Fel}. So, from
the fact that $\sum_{n=1}^{\infty}\bbP(B_n) = \infty,$ one can rely on Theorem 2 of
Section XIII.3 of~\cite[p.312]{Fel} to see that

\[ \bbP(B_n \textrm { infinitely often }) = 1. \]
\noindent
From this we conclude that $\bbP[V] = 0 $, as
\[ V^c \supset C^c \supset \{B_n \textrm { infinitely often }\}. \]

Inequality~(\ref{PVinequalityGEQ}) follows from~(\ref{E: V}) and 
inequality~(\ref{PVinequalityLEQ}) follows from the fact that

\begin {align*}
V^C \supseteq \bigcup_{k=0}^{\infty}\displaystyle \left[R_0 = k,
\bigcap_{j=1}^{k}[R_j \leq k-j] \right].
\end {align*}

\end{proof}

\begin{proof}[Proof of Corollary~\ref{C: FPHo}]
Observe that, as $a_n=\prod_{i=0}^{n}\bbP(R < i+1)$
\[ \frac{a_n}{a_{n+1}}-1=\frac{\bbP(R \geq n+2)}{\bbP(R < n+2)}.\]
\noindent
Therefore
\begin{equation}
\label{E: Raabe}
\lim_{n \rightarrow \infty} n\Big(\frac{a_n}{a_{n+1}}-1\Big)=
\lim_{n \rightarrow \infty}n \bbP(R \geq n).
\end{equation}
\noindent From~(\ref{E: Raabe}), Raabes Test and Theorem~\ref{T:
Criterio}, follow ($I$), ($II$) and ($III$).
\end{proof}

\begin{proof}[Proof of Corollary~\ref{C: lei de potencia}]
Observe that
\begin{align*}
\frac{1}{(\alpha - 1)(n+1)^{\alpha - 1}} & =
\int_{n+1}^{\infty}\frac{1}{x^{\alpha}}dx
\leq \sum_{j=n+1}^{\infty}\frac{1}{j^{\alpha}} \\ & \leq
\int_{n+1}^{\infty}\frac{1}{(x-1)^{\alpha}}dx
= \frac{1}{(\alpha -
1)n^{\alpha - 1}}.
\end{align*}
Then
\begin{displaymath}
\frac{Z_{\alpha}}{(\alpha - 1)}\frac{1}{(n+1)^{\alpha - 1}}
\leq \bbP(R \geq n) \leq \frac{Z_{\alpha}}{(\alpha - 1)}\frac{1}{n^{\alpha - 1}}.
\end{displaymath}
Consequently
\[
\lim_{n \rightarrow \infty}n\bbP(R \geq n) =
\left\{\begin{array}{ll}
+ \infty & \mbox{if $\alpha < 2,$} \\
{6 / \pi^2} & \mbox{if $\alpha = 2,$} \\
0 & \mbox{if  $\alpha > 2.$}
\end{array}
\right.
\]
From Corollary~\ref{C: FPHo}, the conclusion follows.
\end{proof}

\noindent \textit{Proof of Theorem~\ref{T: PFHeVive}}

Let

\[ a_n  =  \prod_{j=0}^{n}\bbP(R_{n-j} < (j+1)m ).\]

\noindent
\textit{Proof of (I).}
As

\[ \sum_{n=t}^{\infty}[\bbP(R_{n} < tm)]^t < \infty \]

\noindent
implies that

\[ \sum_{n=t}^{\infty}\Big[\max_{j \in \{0, \dots ,t-1 \}}\{
\bbP(R_{n-j} < t m )\}\Big]^t < \infty, \]

\noindent
and as

\[ a_n \leq \prod_{j=0}^{t-1}\bbP(R_{n-j} < tm) \leq \Big[\max_{j \in
\{0,\dots,t-1 \}}\{ \bbP(R_{n-j} < tm )\}\Big]^t, \]

\noindent
the series which terms are $a_n$ converges. So we can
use~(\ref{E: PFmae}) in order to get the result.

\noindent
\textit{Proof of (II).}
Let

\[ r_n = \prod_{j=0}^{n} [\mathbf{P}(R < (j+1)m ) + b_{(j+1)m}]. \]

As

\begin{displaymath}
n \Big(\frac{r_n}{r_{n+1}} - 1 \Big) = \frac{n[\mathbf{P}(R \geq (n+2)m ) -
b_{(n+2)m} ]}{\mathbf{P}(R < (n+2)m ) +  b_{(n+2)m}},
\end{displaymath}

from the hypothesis
\begin{displaymath}
\lim_{n \rightarrow \infty}n \Big(\frac{r_n}{r_{n+1}}-1 \Big) > 1.
\end{displaymath}

But $a_n \leq r_n$, therefore the series which terms are $a_n$ is convergent and so
we can use Proposition~\ref{P: PFmae} to get the desired result.

\noindent
\textit{Proof of (III).} It follows from~(\ref{E: V}).
\qed

\subsection{Reverse Firework Process}

First consider the following variation of the \hofp. Instead of
having just the origin activated at time zero, we consider that
all vertices to its left are also activated at time zero. The set
of independent random variables which defines the radius of
influence of all vertices is $\{F_i\}_{i\in\bbZ},$ all ha\-ving
the same distribution as $R$, the random variable which defines
the \horfp.

For this variation of the \hofp\ let us define the following events
\begin{itemize}
\item $\mathcal{V}_n = $ \textit{the vertex $n$ is hit by an explosion},
\item $\mathcal{V} = $ \textit{the process survives}.
\end{itemize}
By analogy ``\textit{to survive}'' in this variation means to hit
infinitely many vertices of $\bbN.$ It follows that
\begin{equation}
\label{E: Vbonito}
\mathcal{V} =
\bigcap_{n=0}^{\infty}\bigcup_{j=0}^{\infty}[F_{n-j}
\geq j+1].
\end{equation}

\begin{prop}
\label{P:Morter}
If
$\mathbb{E}(R) < \infty$, then $\bbP(\mathcal{V})=0$.
\end{prop}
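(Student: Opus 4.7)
My plan is to reduce this proposition to the already-established fact (Remark~\ref{R: ERfiniteDyes}) that the ordinary \hofp\ dies out almost surely whenever $\bbE[R] < \infty$. The intuition is that, under finite mean, the explosions originating at the non-positive vertices can only push the frontier a finite distance into $\bbN$; beyond this initial head start the dynamics decomposes as a finite union of copies of the ordinary \hofp, each of which dies out a.s.

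The first step is to show that $Y_0 := \max_{i \leq 0}(i+F_i) = \max_{k \geq 0}(F_{-k}-k)$, the rightmost vertex of $\bbN$ reached at time~$1$ by some explosion from a non-positive vertex, is almost surely finite. Since $F_{-k}$ has the same distribution as $R$ and $R$ takes values in $\bbN$, one has $\sum_{k \geq 0}\bbP(F_{-k} \geq k) = \sum_{k \geq 0}\bbP(R \geq k) = 1+\bbE[R] < \infty$, so by the first Borel--Cantelli lemma the events $\{F_{-k} \geq k\}$ occur for only finitely many $k$, whence $Y_0 < \infty$ a.s.

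The second step conditions on $\{Y_0 \leq y\}$ for a fixed $y \in \bbN$. On that event the set of vertices of $\bbN$ activated by time~$1$ is contained in $\{0, 1, \dots, y\}$, and the subsequent dynamics uses only the independent sources $\{F_i\}_{i \geq 1}$. Writing $V^{(k)}$ for the event that the \hofp\ started at vertex $k$ with sources $F_k, F_{k+1}, \dots$ survives, the elementary monotonicity of reachability (in the directed graph with an edge $k \to j$ whenever $k < j \leq k+F_k$) gives that the set reachable from $\{0, 1, \dots, y\}$ equals the union, over $k \in \{0, 1, \dots, y\}$, of the sets reachable from the single vertex $k$. In particular $\mathcal{V} \cap \{Y_0 \leq y\} \subseteq \bigcup_{k=0}^{y} V^{(k)}$. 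Each $V^{(k)}$ has the same distribution as the survival event $V$ of the standard \hofp, so by Remark~\ref{R: ERfiniteDyes} $\bbP(V^{(k)}) = 0$, and a union bound yields $\bbP(\mathcal{V} \cap \{Y_0 \leq y\}) = 0$. Letting $y \to \infty$ along the full-measure event $\{Y_0 < \infty\}$ delivers $\bbP(\mathcal{V}) = 0$.

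The step I expect to require the most care is the set-theoretic reduction $\mathcal{V} \cap \{Y_0 \leq y\} \subseteq \bigcup_{k \leq y} V^{(k)}$: the events $V^{(k)}$ are strongly dependent because they share the entire tail $\{F_i\}_{i \geq 1}$, so one cannot argue through any independence hypothesis. Only the purely combinatorial monotonicity of reachability in the source set is needed, together with the elementary fact that a finite union of null events is null, so once this observation is articulated the conclusion follows immediately.
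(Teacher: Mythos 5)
Your argument is correct, but it takes a genuinely different route from the paper. The paper proves Proposition~\ref{P:Morter} directly by a doubling argument: it writes $\mathcal{V}_{2n} \subseteq \mathcal{V}_n \cap [\mathcal{A}_n \cup \mathcal{B}_n]$, where $\mathcal{A}_n$ is the event that some source to the left of $n$ reaches $2n$ (whose probability tends to $0$ because $\sum_i \bbP(R \geq i) < \infty$) and $\mathcal{B}_n$ is the event that some source in $[n,2n)$ reaches $2n$ (whose probability is bounded away from $1$ by the standing assumption $\bbP(R<1)>0$ together with~(\ref{E: CD})); independence then gives $\bbP(\mathcal{V}_{2n}) \leq \bbP(\mathcal{A}_n) + \lambda\,\bbP(\mathcal{V}_n)$ and hence $\bbP(\mathcal{V}_n) \to 0$. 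You instead reduce the statement to the one-source homogeneous result: Borel--Cantelli makes the head start $Y_0$ of the negative sources a.s.\ finite, and on $\{Y_0 \leq y\}$ the eventually activated set is the digraph-reachable set from finitely many starting points, each of whose survival events is a copy of $V$ for the \hofp\ and hence null by Remark~\ref{R: ERfiniteDyes} (equivalently Corollary~\ref{C: FPHo}(II)); a finite union bound and $y \to \infty$ finish the proof. Both are sound and both use the same two ingredients (summable tails for the far-away sources, plus the fact that finitely many nearby sources cannot survive), but they package them differently: the paper's proof is self-contained within the Reverse Firework subsection and gives decay of $\bbP(\mathcal{V}_n)$ along the doubling subsequence, while yours is shorter modulo earlier results but imports the renewal-theoretic machinery behind Theorem~\ref{T: Criterio}; there is no circularity, since that theorem does not depend on Proposition~\ref{P:Morter}. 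Two cosmetic points: the union should really run over $k \in \{1,\dots,y\}$ (vertex $0$'s own explosion is already absorbed into $Y_0$), though including $k=0$ only enlarges the union of null sets; and your phrase ``uses only the sources $\{F_i\}_{i\geq 1}$'' applies to the dynamics from time $2$ on, which is exactly how you use it.
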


\noindent \textit{Proof of Proposition \ref{P:Morter}} Let us define
the following events
\begin{displaymath}
\mathcal{A}_n = \bigcup_{i=-\infty}^{n-1}\{F_{i} \geq
2n-i\}
\end{displaymath}
\noindent
and
\begin{displaymath}
\mathcal{B}_n = \bigcup_{i=n}^{2n-1}\{F_{i} \geq 2n-i\}.
\end{displaymath}

Observe that
\begin{displaymath}
\mathcal{V}_{2n} \subseteq \mathcal{V}_n \cap [\mathcal{A}_n \cup
\mathcal{B}_n].
\end {displaymath}

Therefore
\begin{equation*}
\label{relan2nr}
\bbP(\mathcal{V}_{2n})
\leq \bbP(\mathcal{A}_n) +
\bbP(\mathcal{B}_n)\bbP(\mathcal{V}_n).
\end{equation*}

Now
\begin{align*}
\bbP(\mathcal{A}_n) &\leq \sum_{i=-\infty}^{n-1}\bbP(F_{i} \geq 2n-i) \\
&=\sum_{i=n+1}^{\infty}\bbP(F_{2n-i} \geq i) \\
&=\sum_{i=n+1}^{\infty}\bbP(R \geq i) \longrightarrow 0,
\end{align*}
and
\begin{align*}
\bbP(\mathcal{B}_n) &= \bbP\displaystyle\left(\bigcup_{i=n}^{2n-1}
\{F_i \geq 2n-i \}\right) \\
&= 1 - \prod_{i=n}^{2n-1}\bbP(F_i < 2n - i) \\
&\leq 1 - \prod_{i=1}^{\infty}\bbP(R < i).
\end{align*}

\noindent
Then,~(\ref{E: CD}) and $\mathbb{E}(R) < \infty$
guarantee the existence of $\lambda$ $\in$ (0,1) such that
\begin {displaymath}
\bbP(\mathcal{B}_n)\leq \lambda
\end {displaymath}
for all $n.$  So, as for the homogeneous case
$\bbP(\mathcal{A}_n) \geq \bbP(\mathcal{A}_{n+1})$,
\begin{align*}
\label{E: deflema} \lim_{n \rightarrow
\infty}\bbP(\mathcal{V}_{n}) = 0
\end{align*}
and this implies that $\bbP(\mathcal{V}) = 0$ as
$\mathcal{V}_{n+1} \subset \mathcal{V}_{n}$. \qed

%

\noindent \textit{Proof of Theorem~\ref{T: horfp}}

Let $\{R_i\}_{i \in \bbN}$ independent random variables
distributed as $R.$  Observe that
\begin{equation}
\label{E: Sbonito}
S = \bigcap_{n=0}^{\infty}\bigcup_{j=1}^{\infty}[R_{n+j} \geq j].
\end{equation}

By using FKG inequality (Grimmett~\cite[p.34]{Grimmett}) and the fact that intersections of increasing events
is an increasing event, we have that

\begin{align*}
\mathbb{P}\displaystyle
\left(\bigcap_{n=0}^{n_0}\bigcup_{j=1}^{\infty}[R_{n+j} \geq
j]\right) \geq \prod_{n=0}^{n_0}\mathbb{P}\displaystyle
\left(\bigcup_{j=1}^{\infty}[ R_{n+j} \geq j] \right)
\end{align*}

for all $n_0.$ Taking the limit $n_0
\rightarrow \infty$, by the continuity of probability

\begin{align*}
\mathbb{P}\displaystyle
\left(\bigcap_{n=0}^{\infty}\bigcup_{j=1}^{\infty}[R_{n+j} \geq
j]\right) \geq \prod_{n=0}^{\infty}\mathbb{P}\displaystyle
\left(\bigcup_{j=1}^{\infty}[ R_{n+j} \geq j] \right).
\end{align*}

Therefore

\begin{equation}
\label{l1}
\bbP(S) \geq
\prod_{n=0}^{\infty}\Big[1-\prod_{j=1}^{\infty}[1-\bbP(R_{n+j}
\geq j)]\Big].
\end{equation}

\noindent
\textit{Proof of (I).} From the hypothesis

\begin{equation}
\label{E: diverge}
\sum_{j=1}^{\infty}\bbP(R \geq j) = \infty.
\end{equation}

Now, (\ref{E: CD}) and~(\ref{E: diverge}) implies that

\[ \prod_{j=1}^{\infty}[1 - \bbP(R \geq j)] = 0,\]

\noindent
and $\bbP(S) = 1$ follows by~(\ref{l1}).

\noindent
\textit{Proof of (II).}
By Proposition~\ref{P:Morter}, (\ref{E: Vbonito}) and the
fact that $R_i$ and $F_i$ have the same distribution

\begin{equation}
\label{E: key}
\bbP
\left(\bigcap_{n=0}^{\infty}\bigcup_{j=0}^{\infty}[R_{n-j}
\geq j+1]\right) = 0.
\end{equation}

\noindent
By the other hand, as $R_i$ are all distributed as $R$
\begin{align*}
\bbP \left(\bigcap_{n=0}^{\infty}\bigcup_{j=0}^{\infty}[R_{n-j}
\geq j+1]\right) = \bbP\left(\bigcap_{n=0}^{\infty}\bigcup_{j=0}^{\infty}[R_{n+j+1} \geq
j+1]\right),
\end{align*}

\noindent
and therefore, by~(\ref{E: Sbonito}) and~(\ref{E: key}), $\bbP(S)=0.$ \qed

\noindent \textit{Proof of Theorem~\ref{T: PFRHe}}

\noindent \textit{Proof of (I).} Assuming that
$\sum_{k=1}^{\infty}\bbP(R_{n+k} \geq k) = \infty $ for all $n$
and considering~(\ref{E: CD}), one can see that
$\prod_{k=1}^{\infty}[1-\bbP(R_{n+k} \geq k)] = 0$ for all $n$.
Therefore, by~(\ref{l1}), $\bbP(S)=1.$ By the other side, as
$\bbP(S)  \leq 1 - \prod_{k=1}^{\infty}\bbP(R_{n+k} <k)$ for all
$n$, if $\bbP(S)=1$ we have that
$\prod_{k=1}^{\infty}[1-\bbP(R_{n+k} \geq k)] = 0$ for all $n$.
Now, from~(\ref{E: CD}), $\sum_{k=1}^{\infty}\bbP(R_{n+k} \geq k)
= \infty$ for all $n$.

\noindent
\textit{Proof of (II).}
From $\sum_{n=1}^{\infty}\prod_{k=1}^{\infty}\bbP(R_{n+k} < k) <
\infty$, follows that, by the use of~(\ref{E: CD}),
$\prod_{n=0}^{\infty}[1-\prod_{k=1}^{\infty}[1-\bbP(R_{n+k}
\geq k)]]>0$. Then, by~(\ref{l1}) we have that $\bbP(S)>0.$
\qed

\section{Final Remarks and Examples}
\label{S: FR}

We consider two discrete propagation phenomena modeling in their homogeneous and heterogeneous versions.
While the \textit{Firework Process} models a phenomena where there is at all times a finite number of 
individuals trying to spread an information for an infinite group of individuals, the 
\textit{Reverse Firework Process} models a phenomena where there is always an infinite number of individuals 
willing and working towards to heard about that information from a finite 
quantity of informed individuals. Our results show that the four versions are qualitatively different.

Considering the \hofp, Remark~\ref{R: ERfiniteDyes} shows that the information will no be spread for an infinite number of
individuals if $\bbE[R]$ is finite. To have a \textit{radius of influence} with infinite expectation 
is also no guarantee for the information to reach an infinite number of individuals, as Remark~\ref{R: PowerLaw} shows. 
Besides, the probability of not reaching an infinite amount 
of individuals is at least $\bbP[R=0]$. Conversely, in the \hefp, to have an infinite expectation guarantees 
almost surely that the information will spread out among an infinite amount of individuals, as Theorem~\ref{T: horfp}
points out. Furthermore in the case where the \textit{radius of influence} has a power 
law distribution as in (\ref{E: Power Law}), the process works in opposite direction as Remark~\ref{T: Hetero Power Law} 
shows for $\alpha=2$. The processes agree for R whose expectation is finite. Next we present some final examples pointing to some
extreme cases.

Let $\{b_n\}_{n \in \bbN}$ be a non-increasing sequence convergent
to 0 and such that $b_0<1.$

\begin{exa}
\label{E: um}

It is possible to have in the \hefp\  the expectation of the radius of
influence infinite for all vertices together and the process dies out almost surely.

\begin{enumerate}
\item[\textit{(i)}] $\bbP(R_n = 0) = 1 - b_n$ and $\bbP(R_n = k) = b_{n+k-1} - b_{n+k}$ for $k \geq 1.$
\item[\textit{(ii)}] $\sum_{n=0}^{\infty}b_n = \infty.$
\item[\textit{(iii)}] $\lim_{n \rightarrow \infty} n b_n = 0.$
\end{enumerate}

Observe that $\mathbb{E}(R_n) = \infty$ for all $n$ from \textit{(ii)}. Besides
$\bbP[V]=0$ from \textit{(iii)}, because

\begin{equation}
\label{E: example}
\bbP(V_n) \leq \sum_{k=0}^{n-1}\bbP(R_k \geq n-k) =
\sum_{k=0}^{n-1}b_{n-1} = (n-1)b_n.
\end{equation}
\end{exa}

\begin{exa}

It is possible to have in the \hefp\ the expectation of the radius of
influence finite for all vertices and the process survives with positive probability.
Assume that $\sum_{n=0}^{\infty}b_n < \infty,$ while

\begin{itemize}
\item $\bbP(R_n = 0) = b_n$
\item $\bbP(R_n = 1) =1 - b_n$
\end{itemize}

Then $\mathbb{E}(R_n) < 1$ for all $n$ and
$\mathbb{P}(V) > 0$ by part ($I$) of Theorem~\ref{T: PFHeVive}
with $m=t=1$.
\end{exa}

\begin{exa} Next we present an example where $\bbP[S]=1$ for
a \herfp\ while $\bbP[V]=0$ for a \hefp. For this aim consider

\begin{enumerate}
\item[\textit{(i)}] $\bbP(R_n = 0) = 1 - b_n$ and $\bbP(R_n = n) = b_n.$
\item[\textit{(ii)}] $\sum_{n=0}^{\infty}b_n = \infty.$
\item[\textit{(iii)}] $\lim_{n \rightarrow \infty} n b_n = 0.$
\end{enumerate}

Observe that even though $\lim_{n \to \infty} \bbE[R_n] = 0$ and
$\lim_{n \to \infty} \bbP(R_n = 0) = 1,$
from Theorem~\ref{T: PFRHe} and \textit{(ii)} it is true for the \herfp\
that $\mathbb{P}(S) = 1.$ In the opposite direction, by~(\ref{E: example})
and \textit{(iii)} one have that $\bbP[V]=0$ for the \hefp.

\end{exa}

\end{document}